\documentclass[letterpaper]{ieeeconf}
\IEEEoverridecommandlockouts                              
\usepackage[utf8]{inputenc}
\usepackage{textcomp}

\usepackage{graphicx}
\usepackage{url}
\usepackage[table]{xcolor}
\usepackage{amsmath}
\usepackage[version=4]{mhchem}
\usepackage{siunitx}
\usepackage{graphicx}
\usepackage{longtable,tabularx}
\usepackage[style=ieee,doi=true]{biblatex}
\usepackage{tikz}
\usepackage{comment}
\usetikzlibrary{positioning, arrows.meta}
\AtBeginBibliography{\small}

\usepackage{amsmath} 
\usepackage{amsfonts}
\usepackage{amsthm}

\newtheorem{theorem}{Theorem}

\newtheorem{proposition}{Proposition}[section]

\theoremstyle{definition}

\newtheorem{remark}{Remark}

\usepackage{float} 
\usepackage[skip=0em,font=footnotesize]{caption}

\usepackage[margin = 1in]{geometry}

\usepackage{tikz}
\usetikzlibrary{shapes,arrows,calc,positioning}
\tikzstyle{bigblock} = [draw, fill=blue!20, rectangle, 
    minimum height=6em, minimum width=8em]
\tikzstyle{medblock} = [draw, fill=blue!20, rectangle, 
    minimum height=4em, minimum width=4em]    
\tikzstyle{mux} = [draw, fill=black!20, rectangle, 
    minimum height=5em, minimum width=0.1em]    
\tikzstyle{smallblock} = [draw, fill=blue!20, rectangle, 
    minimum height=2em, minimum width=3em]
    
\tikzstyle{data_block} = [draw, fill=green!20, rectangle, 
    minimum height=2em, minimum width=3em]
\tikzstyle{ops_block} = [draw, fill=blue!20, rectangle, 
    minimum height=2em, minimum width=3em]    
\tikzstyle{est_block} = [draw, fill=red!20, rectangle, 
    minimum height=2em, minimum width=3em]    
    
\tikzstyle{sum} = [draw, fill=blue!20, circle, node distance=1cm,minimum height=0.5cm]
\tikzstyle{signal} = [coordinate]
\tikzstyle{pinstyle} = [pin edge={to-,thin,black}]
\tikzstyle{block} = [draw, fill=blue!20, rectangle, 
    minimum height=3em, minimum width=9em]
\tikzstyle{blockS} = [draw, fill=blue!20, rectangle, 
    minimum height=3em, minimum width=4em]    
\tikzstyle{input} = [coordinate]
\tikzstyle{output} = [coordinate]
\usetikzlibrary{matrix}

\usetikzlibrary{positioning}
\usetikzlibrary{math}

\usepackage{hyperref}
\usepackage{xcolor}
\hypersetup{
    colorlinks,
    linkcolor={blue!100!black},
    citecolor={blue!50!black},
    urlcolor={blue!80!black}
}

\newcommand{\bc}{\begin{center}}
\newcommand{\ec}{\end{center}}
\newcommand{\benum}{\begin{enumerate}}
\newcommand{\eenum}{\end{enumerate}}
\newcommand{\nn}{\nonumber}
\newcommand{\matl}{\left[ \begin{array}}
\newcommand{\matr}{\end{array} \right]}

\renewcommand{\matl}{\begin{bmatrix}}
\renewcommand{\matr}{\end{bmatrix}}

\newcommand{\matls}{\left[ \begin{smallmatrix}}
\newcommand{\matrs}{\end{smallmatrix} \right]}
\newcommand{\isdef}{\stackrel{\triangle}{=}}

\newcommand{\rmI}{{\rm I}}

\newcommand{\rmd}{{\rm d}}

\newcommand{\rms}{{\rm s}}

\newcommand{\BBR}{{\mathbb R}}

\newcommand{\SA}{{\mathcal A}}

\newcommand{\SC}{{\mathcal C}}

\newcommand{\SF}{{\mathcal F}}

\newcommand{\SSS}{{\mathcal S}}

\title{Admissibility-Preserving Constraint Lifting for \\ Conditional Deadbeat Control of Discrete-Time Nonlinear Systems}
\author{
Jhon Manuel Portella Delgado
and
Ankit Goel
\thanks{Jhon Manuel Portella Delgado was a Ph.D. student in the Department of Mechanical Engineering, University of Maryland, Baltimore County, 1000 Hilltop Circle, Baltimore, MD 21250, when this work was performed. He is now a postdoctoral researcher with the Department of Aerospace Engineering, University of Michigan, Ann Arbor, MI 48109, USA. {\tt\small jportella@umbc.edu, jhonp@umich.edu}}%
\thanks{Ankit Goel is with the Department of Mechanical Engineering, University of Maryland, Baltimore County, 1000 Hilltop Circle, Baltimore, MD 21250. {\tt\small ankgoel@umbc.edu}}%
}

\begin{document}

\maketitle




\begin{abstract}
This paper presents a constraint-enforcing control framework for a class of discrete-time strict-feedback nonlinear systems.
The objective is to guarantee closed-loop stability while ensuring forward invariance of a prescribed safe set defined by state constraints.
The proposed approach transforms the constrained control problem into an equivalent unconstrained one through smooth constraint-lifting mappings constructed using strictly increasing sigmoid functions. 
Controller synthesis is then performed in the lifted coordinates, enabling recursive backstepping design while preserving the admissibility of the constrained states.
Conditions on the controller gains are derived to guarantee both asymptotic stability of the closed-loop system and forward invariance of the admissible domain of the lifting functions.
The analysis also establishes a conditional deadbeat property for the second backstepping step once the system trajectory enters a region in which the lifting-domain admissibility conditions are satisfied. 
Numerical simulations of a constrained Roup oscillator demonstrate the effectiveness of the proposed method in enforcing state constraints while tracking a reference command.

\end{abstract}
\textit{\bf keywords:} 
Constraint-lifting control, State-constrained control, Discrete-time nonlinear systems, Forward invariance, Deadbeat control, Backstepping control

\section{Introduction}
 
The enforcement of state constraints in discrete-time nonlinear systems is the problem of forward invariance of a prescribed safe set.
Safety-critical and constraint-enforcing controllers are typically designed in continuous time and discretized only for implementation, and this discretization step is not guaranteed to preserve the continuous-time invariance guarantees.
Guarantees established directly in discrete time are therefore needed for sampled-data systems, where safety, actuator limits, and admissibility conditions must be satisfied at every sampling instant.

Set-invariance methods provide the underlying theoretical framework for enforcing such constraints.
Constrained-control architectures offer a practical mechanism instead.
A widely used example is the reference governor, which augments a nominal controller with a supervisory law.
This supervisory law modifies the command whenever constraint violation is predicted \cite{Blanchini1999, Gilbert1995, Garone2017}.
These methods are effective and well-established, but they serve as add-on schemes rather than yielding an explicit stabilizing control law for the constrained plant itself.
More recently, barrier-based methods have also been extended to discrete-time systems through discrete control barrier functions, which enforce forward invariance by imposing a safety inequality at each time step \cite{Agrawal2017}.
However, these methods are typically implemented through an optimization problem, and in the discrete-time setting, the resulting program is not necessarily convex \cite{Agrawal2017}, which for nonlinear systems may increase computational complexity and complicate the stability analysis of the closed-loop system.
A third approach addresses constraints via nonlinear coordinate transformations that map a bounded state space to an unconstrained one, thereby enabling the use of classical nonlinear control tools on the transformed dynamics \cite{Guo2014}.
Although this idea is attractive, conventional transformation-based designs and barrier Lyapunov formulations may suffer from numerical ill-conditioning near the constraint boundary, where the transformed coordinates or their derivatives become large, and these difficulties become more pronounced when several constrained states are handled recursively \cite{Tee2009, Guo2014}.

Across these approaches, the gap is the same.
No method yields a control law that is explicit, free of online optimization, and well conditioned near the constraint boundary.
Motivated by this gap, this paper develops a controller for a class of discrete-time nonlinear systems.
The controller is designed using a constraint-lifting framework.
The proposed controller guarantees closed-loop stability, reference tracking, and satisfaction of the state constraints.

The central idea is to map constrained states into an unconstrained coordinate system via smooth constraint-lifting transformations constructed from strictly increasing sigmoid functions.
Controller synthesis is then carried out in the lifted coordinates using a recursive design, while the inverse transformation guarantees that the original state constraints are satisfied.
The contributions of this paper are as follows.
First, a constraint-lifting control framework is developed for discrete-time strict-feedback systems that transforms the constrained control problem into an equivalent unconstrained one.
Second, conditions on the controller gains are derived to guarantee both asymptotic stability of the closed-loop system and forward invariance of the admissible domain of the lifting transformations.
Third, the analysis establishes a conditional deadbeat property for the second backstepping step once the closed-loop trajectory enters a region in which the admissibility conditions are satisfied.
A preliminary version of this paper appeared as an arXiv preprint~\cite{delgado2026stateconstrained}.
 
The remainder of this paper is organized as follows.
Section~\ref{sec:Problem_Formulation} formulates the constrained tracking problem.
Section~\ref{sec:Constraint_Lifting} introduces the lifting transformation and the lifted dynamics.
Section~\ref{sec:Controller_design} develops the backstepping controller and its closed-loop error dynamics.
Section~\ref{sec:Admisibility_constraint_leifting} establishes admissibility and the conditional deadbeat property.
Section~\ref{sec:Numerical_Example} presents the numerical example, and Section~\ref{sec:conclusions} concludes the paper.

\section{Problem Formulation}
\label{sec:Problem_Formulation}
Consider the system
\begin{align}
    x_{k+1,1}
        &=
            f_1(x_{k,1})
            +
            g_1(x_{k,1})
            x_{k,2}
    \label{eq:x1_dot}
    \\
    x_{k+1,2}
        &=
            f_2(x_{k,1},x_{k,2})
            +
            g_2(x_{k,1},x_{k,2})
            u_{k},
    \label{eq:x2_dot}
\end{align}
where $x_{k,1},$ $x_{k,2} \in \BBR,$ are the states, $u_k \in \BBR$ is the control, and the functions $f_1 : \BBR \to \BBR,$ $g_1 : \BBR \to \mathbb{R},$ $f_2 : \BBR \times \BBR \to \BBR,$ and $g_2 : \BBR \times \BBR \to \mathbb{R}.$
The goal is to design a feedback controller $u_k$ such that $x_{k,1}$ tracks a desired command $x_{k,1}^{\rmd}$ while $|x_{k,1}| < \overline{x}_1,$ and $|x_{k,2}| < \overline{x}_2.$
In this work, we consider the scalar case to simplify the exposition and focus on the key ideas of the proposed constraint-lifting framework. 
The extension to multi-dimensional systems is conceptually similar and will be developed in future work.

\section{Constraint Lifting}
\label{sec:Constraint_Lifting}

The states to be constrained are \textit{lifted} using a constraint-lifting transformation to obtain an unconstrained representation in a lifted state space.
The sigmoid functions and their inverses to lift the constraints are described in detail in \cite{portella2026constrained}.

First, the states $(x_1,x_2)$ are mapped to intermediate variables $(\chi_1,\chi_2)$ such that the bounds on the components of $x_1$ and $x_2$ are normalized to $\pm1$. 
Consequently, whenever $(x_1,x_2)\in\SSS$, each component of $(\chi_1,\chi_2)$ lies in $(-1,1)$, that is, $(\chi_1,\chi_2)\in\SC$.
The normalized variables $(\chi_1,\chi_2)$ are then mapped to $(z_1,z_2)\in\BBR^2$ using a constraint-lifting function. 
Controller synthesis is performed in the lifted $z$-coordinates for the transformed state $x_{1}^{\rmd}$.
Finally, a transformation from $(z_1,z_2)$ to $(\zeta_1,\zeta_2)$ is introduced to simplify both the inverse mapping from $z$ to $x$ and the subsequent stability analysis.
The sequence of transformations from $x$ to $z$, together with the intermediate variables $\chi$ and $\zeta$, is shown in Figure \ref{fig:state_transformation}.

\begin{figure}[!ht]
\centering
    {%
    \begin{tikzpicture}[>={stealth'}, line width = 0.25mm]

    \node [input, name=ref]{};
    
    \node [smallblock, fill=green!20, rounded corners, right = 0.5cm of ref , 
           minimum height = 0.6cm, minimum width = 0.7cm] 
           (controller) {$\dfrac{x}{\overline{x}}$};
    
    \node [smallblock, rounded corners, right = 2cm of controller, 
           minimum height = 0.6cm , minimum width = 0.7cm] 
           (integrator)            {$\overline x\,\phi(\chi)$};
    
    \node [smallblock, fill=green!20, rounded corners, below = 0.5cm of integrator, 
           minimum height = 0.6cm , minimum width = 0.7cm] 
           (system) {$\dfrac{z}{\overline{x}}$};
           
    \node [smallblock, rounded corners, below = 0.5cm of controller, 
           minimum height = 0.6cm , minimum width = 0.7cm] 
           (integrator2)            {$\overline x\,\psi(\zeta)$};

    \node [output, right = 1.0cm of system] (output) {};
    
    
    \node [input, left = 1.0cm of controller] (reference) {};
    
    \draw [->] (controller) -- node [above] {$\chi\in \SC$} (integrator);
    \draw [->] (integrator.0) -- +(1,0) node [above, xshift = -1em] {$z\in\BBR^n$} |-  (system.0);
    \draw [->] (system.180)  -- node [above] {$\zeta \in \BBR^n$} (integrator2.0);
    \draw [->] (integrator2.180) -- +(-1,0) |- node [above, xshift = 1em] {$x \in \SSS$} (controller.180);
    \end{tikzpicture}
    }  
    \caption{
        State transformations with the constraint-lifting function and its inverse used in the control synthesis.
    }
    \label{fig:state_transformation}
\end{figure}

Define the normalized states
\begin{align}
    \chi_{k,1}
        &\isdef
            \dfrac{x_{k,1}}{\overline{x}_1},
    \qquad
    \chi_{k,2}
        \isdef
            \dfrac{x_{k,2}}{\overline{x}_2}.
\end{align}
By construction, the constraint boundaries are mapped to $\pm1$. 
Hence, if $(x_1,x_2)\in\SSS$, then $(\chi_1,\chi_2)\in\SC$.
Next, define the lifted coordinates
\begin{align}
    z_{k,1} 
        &\isdef
            \overline{x}_1\,\phi_1(\chi_{k,1}),
    \qquad
    z_{k,2} 
        \isdef
            \overline{x}_2\,\phi_2(\chi_{k,2}),
    \label{eq:x2_to_z2}
\end{align}
where $\phi_i:\SC\to\BBR$ is the constraint-lifting function.
To simplify controller synthesis, we define the auxiliary variables
\begin{align}
    \zeta_{k,1}
        &\isdef
            \dfrac{z_{k,1}}{\overline{x}_1},
    \qquad
    \zeta_{k,2}
        \isdef
            \dfrac{z_{k,2}}{\overline{x}_2}.
    \label{eq:zeta2_def}
\end{align}
The inverse transformation is then defined by
\begin{align}
    x_{k,1} 
        &=
            \overline{x}_1\,\psi_1(\zeta_{k,1}),
    \qquad
    x_{k,2} 
        =
            \overline{x}_2\,\psi_2(\zeta_{k,2}),
    \label{eq:z2_to_x2}
\end{align}
where $\psi_i:\BBR\to\SSS$.

Under these transformations, the dynamics \eqref{eq:x1_dot}--\eqref{eq:x2_dot} expressed in the lifted coordinates $z$ become
\begin{align}
    z_{k+1,1}
        &=
            \overline{x}_1
            \phi_1
            \bigl(
                \SF_1(z_{k,1},z_{k,2})
            \bigr),
    \label{eq:z1_dot}
    \\
    z_{k+1,2}
        &=
            \overline{x}_2
            \phi_2
            \bigl(
                \SF_2(z_{k,1},z_{k,2},u_k)
            \bigr),
    \label{eq:z2_dot}
\end{align}
where
{\small{
\begin{align}
    \SF_1(z_{k,1},z_{k,2})
        &\isdef
            \dfrac{
                f_1\bigl(\overline{x}_1\psi_1(\zeta_{k,1})\bigr)
            }
            {\overline{x}_1}
            \nn \\ &\quad 
            +
            \dfrac{\overline{x}_2}{\overline{x}_1}
            g_1\bigl(\overline{x}_1\psi_1(\zeta_{k,1})\bigr)
            \psi_2(\zeta_{k,2}),
    \\
    \SF_2(z_{k,1},z_{k,2},u_k)
        &\isdef
            \dfrac{
                f_2\bigl(
                    \overline{x}_1\psi_1(\zeta_{k,1}),
                    \overline{x}_2\psi_2(\zeta_{k,2})
                \bigr)
            }
            {\overline{x}_2}
            \nn \\ &\quad 
            +
            \dfrac{
                g_2\bigl(
                    \overline{x}_1\psi_1(\zeta_{k,1}),
                    \overline{x}_2\psi_2(\zeta_{k,2})
                \bigr)
            }
            {\overline{x}_2}
            u_k.
\end{align}
}}

\section{Controller Synthesis}
\label{sec:Controller_design}

Let $x_{k,1}^{\rmd}\in\SSS$ denote the desired value of $x_1$. Define the corresponding normalized and lifted states
\begin{align}
    \chi_{k,1}^{\rmd}
        &\isdef
            \frac{x_{k,1}^{\rmd}}{\overline{x}_1},
    \ 
    z_{k,1}^{\rmd}
        \isdef
            \overline{x}_1\,\phi_1(\chi_{k,1}^{\rmd}),
    \ 
    \zeta_{k,1}^{\rmd}
        \isdef
            \frac{z_{k,1}^{\rmd}}{\overline{x}_1}.       
    \label{eq:z1d}
\end{align}
and define the tracking error
\begin{align}
    e_{k,1}
        &\isdef
            \psi_1
            \left(
                \zeta_{k,1}
            \right)
            -
            \psi_1
            \left(
                \zeta_{k,1}^{\rmd}
            \right).
    \label{eq:e1_def}
\end{align}

\subsection{\texorpdfstring{$e_1$}{e1} stabilization}

From \eqref{eq:e1_def} and the lifted dynamics \eqref{eq:z1_dot},
\begin{align}
    e_{k+1,1}
        &=
            \SF_1(z_{k,1},z_{k,2})
            -
            \psi_1
            \!\left(
                \zeta_{k+1,1}^{\rmd}
            \right).
    \label{eq:e1_dynamics_initial}
\end{align}

If $z_{k,2}$ could be selected directly, imposing
\begin{align}
    \SF_1(z_{k,1},z_{k,2})
        -
        \psi_1
        \!\left(
            \zeta_{k+1,1}^{\rmd}
        \right)
        =
        \rho_1 e_{k,1},
    \label{eq:first_BS_Step}
\end{align}
where $|\rho_1|<1$, would yield $e_{k+1,1}=\rho_1 e_{k,1}$.
However, $z_{k,2}$ is a state and cannot be selected arbitrarily, so \eqref{eq:first_BS_Step} cannot be enforced directly.

Instead, we define the virtual desired state
\begin{align}
    z_{k,2}^{\rmd}
        &\isdef
            \SF_1^\rmI
            \left(
                z_{k,1},
                \rho_1 e_{k,1}
                +
                \psi_1
                \!\left(
                    \zeta_{k+1,1}^{\rmd}
                \right)
            \right),
    \label{eq:z2dk}
\end{align}
where $\SF_1^\rmI(z_1,\cdot)$ denotes the inverse mapping satisfying
    $z_2=\SF_1^\rmI(z_1,y)
    \Longleftrightarrow 
\SF_1(z_1,z_2)=y .$
By construction,
\begin{align}
    \SF_1(z_{k,1},z_{k,2}^{\rmd})
        -
        \psi_1
        \!\left(
            \zeta_{k+1,1}^{\rmd}
        \right)
        &=
            \rho_1 e_{k,1}.
    \label{eq:first_BS_virtual_condition}
\end{align}

Define $\zeta_{k,2}^{\rmd}\isdef z_{k,2}^{\rmd}/\overline{x}_2$ and the second backstepping error
\begin{align}
    e_{k,2}
        &\isdef
            \psi_2
            \!\left(
                \zeta_{k,2}
            \right)
            -
            \psi_2
            \!\left(
                \zeta_{k,2}^{\rmd}
            \right).
    \label{eq:e2_def}
\end{align}

Adding and subtracting $\SF_1(z_{k,1},z_{k,2}^{\rmd})$ in \eqref{eq:e1_dynamics_initial} and using \eqref{eq:first_BS_virtual_condition} gives
\begin{align}
    e_{k+1,1}
        &=
            \rho_1 e_{k,1}
            +
            \SF_1(z_{k,1},z_{k,2})
            -
            \SF_1(z_{k,1},z_{k,2}^{\rmd}).
    \label{eq:e1_dynamics_difference}
\end{align}
From the definition of $\SF_1$, the remaining difference is
\begin{align}
    \SF_1(z_{k,1},z_{k,2})
        -
        \SF_1(z_{k,1},z_{k,2}^{\rmd})
        &=
            a_k e_{k,2},
\end{align}
where
\begin{align}
    a_k
        \isdef
            \frac{\overline{x}_2}{\overline{x}_1}
            g_1
            \bigl(
                \overline{x}_1\psi_1(\zeta_{k,1})
            \bigr).
    \label{eq:ak_def}
\end{align}
Hence,
\begin{align}
    e_{k+1,1}
        &=
            \rho_1 e_{k,1}
            +
            a_k e_{k,2}.
    \label{eq:e1_actual_dynamics}
\end{align}
Unlike the ideal first backstepping step, the actual first-error dynamics carry a coupling term proportional to $e_{k,2}$, since $z_{k,2}$ generally differs from $z_{k,2}^{\rmd}$.

\subsection{\texorpdfstring{$e_2$}{e2} stabilization}

From \eqref{eq:e2_def} and the lifted dynamics \eqref{eq:z2_dot},
\begin{align}
    e_{k+1,2}
        &=
            \SF_2(z_{k,1},z_{k,2},u_k)
            -
            \psi_2
            \!\left(
                \zeta_{k+1,2}^{\rmd}
            \right).
    \label{eq:e2_dynamics_initial}
\end{align}
We choose the control input to satisfy
\begin{align}
    \SF_2(z_{k,1},z_{k,2},u_k)
        -
        \psi_2
        \!\left(
            \zeta_{k+1,2}^{\rmd}
        \right)
        =
        \rho_{2,k} e_{k,2},
    \label{eq:second_BS_Step}
\end{align}
where $|\rho_{2,k}|<1$. Using the inverse mapping $\SF_2^\rmI$ gives
\begin{align}
    u_k
        =
            \SF_2^\rmI
            \Bigg(
                z_{k,1},
                z_{k,2},
                \rho_{2,k}e_{k,2}
                +
                \psi_2
                \!\left(
                    \zeta_{k+1,2}^{\rmd}
                \right)
            \Bigg),
\label{eq:u_k_constraint_lifting}
\end{align}
where $\SF_2^\rmI(z_1,z_2,\cdot)$ denotes the inverse mapping satisfying
    $u=\SF_2^\rmI(z_1,z_2,y)
    \Longleftrightarrow 
    \SF_2(z_1,z_2,u)=y ,$
and thus
\begin{align}
    e_{k+1,2}
        &=
            \rho_{2,k} e_{k,2}.
    \label{eq:e2_dynamics_theorem}
\end{align}

\begin{remark}
Consequently, if for some $k_\rms\geq0,$ $\rho_{2,k}=0$ for all $k\ge k_\rms,$ then $e_{k,2}=0$ for all $k> k_\rms.$
\end{remark}

\subsection{Stability Analysis}

Combining \eqref{eq:e1_actual_dynamics} and \eqref{eq:e2_dynamics_theorem}, the closed-loop error dynamics are
\begin{align}
    \begin{bmatrix}
        e_{k+1,1}
        \\
        e_{k+1,2}
    \end{bmatrix}
        &=
            \begin{bmatrix}
                \rho_1 & a_k
                \\
                0 & \rho_{2,k}
            \end{bmatrix}
            \begin{bmatrix}
                e_{k,1}
                \\
                e_{k,2}
            \end{bmatrix}.
    \label{eq:closed_loop_error_dynamics}
\end{align}
This is a triangular linear time-varying system.
Its diagonal entries $\rho_1$ and $\rho_{2,k}$ are each individually contractive by hypothesis, and $a_k$ is bounded whenever $g_1$ is bounded on the operating range of $x_{k,1}$.
For a triangular linear time-varying system whose diagonal entries are uniformly bounded away from $1$ in magnitude and whose off-diagonal entry is uniformly bounded, the origin is exponentially stable, since each diagonal subsystem is exponentially stable and the coupling term acts as a bounded-input contribution to an already exponentially stable subsystem \cite[Ch.~24]{rugh1996linear}.
Consequently, if $|\rho_1|<1$ and $\sup_k|\rho_{2,k}|<1$, the origin of \eqref{eq:closed_loop_error_dynamics} is exponentially stable.

\section{Admissibility and Forward Invariance of the Constraint-Lifting Transformations}
\label{sec:Admisibility_constraint_leifting}
Admissibility of the constraint-lifting maps $\phi_1$ and $\phi_2$ requires that their arguments remain within the domain $(-1,1)$. 
From \eqref{eq:z1_dot}-\eqref{eq:z2_dot}, this condition is satisfied if, for all $k \ge 0,$
\begin{align}
    |\SF_1(z_{k,1},z_{k,2})| &< 1,
    \quad
    |\SF_2(z_{k,1},z_{k,2},u_k)| < 1.
\end{align}
These conditions ensure that the arguments of $\phi_1$ and $\phi_2$ remain in $(-1,1)$, so that the transformations \eqref{eq:z1_dot}--\eqref{eq:z2_dot} are well defined along the closed-loop trajectories.
Define the admissible sets
\begin{align}
    \SA_1
        &\isdef
            \left\{
                (z_1,z_2)\in\BBR^2
                :
                |\SF_1(z_1,z_2)| < 1
            \right\}, 
    \\
    \SA_2
        &\isdef
            \left\{
                (z_1,z_2,u)\in\BBR^3
                :
                |\SF_2(z_1,z_2,u)| < 1
            \right\}.
\end{align}

\subsection{Forward Invariance of the Admissible Domain of $\phi_2$}

Enforcing the deadbeat choice $\rho_{2,k}=0$ from the initial time may be overly restrictive, since the resulting control input must satisfy not only the admissibility condition of the lifting function $\phi_2$, but also the coupling into $\phi_1$ identified in Section~\ref{sec:Controller_design}. 
To alleviate this restriction, a transient state-dependent gain $\rho_{2,k}$ with $|\rho_{2,k}|<1$ is employed so that the admissibility condition of $\phi_2$ is preserved while the trajectory evolves toward a region where the deadbeat choice becomes admissible. 
The following result establishes forward invariance of the admissible domain of $\phi_2$ during this transient phase.

\begin{theorem}[Forward Invariance of the Admissible Domain of $\phi_2$]
\label{thm:forward_invariance_phi_2}

Consider the transformed dynamics \eqref{eq:z2_dot} with the control law \eqref{eq:u_k_constraint_lifting}. 
Suppose that, for all $0 \le k < k_\rms$, the gain $\rho_{2,k}$ satisfies $|\rho_{2,k}| < 1$ and 
\begin{align}
    \left|
        \rho_{2,k} e_{k,2}
        +
        \psi_2\!\left(
            \zeta_{k+1,2}^{\rmd}
        \right)
    \right|
    &< 1.
    \label{eq:phi2_admissibility_transient_theorem}
\end{align}
Then, for all $0 \le k < k_\rms$, where $k_\rms$ is the finite switching time, the argument of $\phi_2$ remains admissible. 
Hence, the admissible domain of $\phi_2$ is forward invariant for all $0 \le k < k_\rms.$
\end{theorem}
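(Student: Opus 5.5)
The plan is a direct argument by induction on $k$ over the transient window $0\le k<k_\rms$, using the defining property of the inverse map $\SF_2^\rmI$. The key observation is that the control law \eqref{eq:u_k_constraint_lifting} does more than produce some input. By construction, it forces the argument of $\phi_2$ in \eqref{eq:z2_dot} to take a prescribed value, namely
\begin{align}
    \SF_2(z_{k,1},z_{k,2},u_k)
        =
            \rho_{2,k}e_{k,2}
            +
            \psi_2\!\left(\zeta_{k+1,2}^{\rmd}\right),
    \nn
\end{align}
which is exactly \eqref{eq:second_BS_Step}. Once this identity is in hand, hypothesis \eqref{eq:phi2_admissibility_transient_theorem} gives $|\SF_2(z_{k,1},z_{k,2},u_k)|<1$ immediately. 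Hence $(z_{k,1},z_{k,2},u_k)\in\SA_2$.

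The steps, in order, are as follows.

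First, I fix $k$ with $0\le k<k_\rms$ and assume that the quantities at time $k$ are well defined. This means $z_{k,1},z_{k,2}\in\BBR$, so that $\psi_1(\zeta_{k,1})$ and $\psi_2(\zeta_{k,2})$ lie in $(-1,1)$ because $\psi_i$ maps $\BBR$ into the normalized constraint set. It also means $e_{k,2}$ and $\zeta_{k+1,2}^{\rmd}$ are defined.

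Second, I invoke the definition of $\SF_2^\rmI$ to obtain the identity displayed above. This requires the right-hand argument $y=\rho_{2,k}e_{k,2}+\psi_2(\zeta_{k+1,2}^{\rmd})$ to lie in the range of $\SF_2(z_{k,1},z_{k,2},\cdot)$. Since $\SF_2$ is affine in $u$, this holds whenever $g_2(\overline{x}_1\psi_1(\zeta_{k,1}),\overline{x}_2\psi_2(\zeta_{k,2}))\neq0$. I would state this as the standing invertibility assumption already implicit in \eqref{eq:u_k_constraint_lifting}.

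Third, I apply \eqref{eq:phi2_admissibility_transient_theorem} to conclude that the argument of $\phi_2$ lies in $(-1,1)$. Therefore $z_{k+1,2}=\overline{x}_2\phi_2(\cdot)$ is a well-defined real number, and $x_{k+1,2}=\overline{x}_2\psi_2(\zeta_{k+1,2})$ satisfies $|x_{k+1,2}|<\overline{x}_2$.

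Fourth, I close the induction, so that the same reasoning applies at $k+1$ as long as $k+1<k_\rms$. This gives forward invariance of the admissible domain of $\phi_2$ on the whole window.

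The main obstacle is the well-posedness of the induction rather than the inequality itself. The step at time $k+1$ needs $z_{k+1,1}$ to be defined as well, which depends on admissibility of $\phi_1$, i.e., $(z_{k,1},z_{k,2})\in\SA_1$. It also needs $\zeta_{k+2,2}^{\rmd}$ to be defined through $\SF_1^\rmI$. The theorem concerns only $\phi_2$, so I would handle this by making explicit that the statement is conditional: the lifted state is assumed to remain defined along the trajectory, with $\phi_1$-admissibility treated in a separate result. I would also remark that the bound $|\rho_{2,k}|<1$ plays no role in the admissibility argument itself. It serves only to keep \eqref{eq:e2_dynamics_theorem} contractive, so the argument uses \eqref{eq:phi2_admissibility_transient_theorem} alone.
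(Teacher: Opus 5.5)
Your proposal is correct and is essentially the paper's argument: the control law \eqref{eq:u_k_constraint_lifting} enforces \eqref{eq:second_BS_Step}, i.e., \eqref{eq:SF2_relation_theorem}, so hypothesis \eqref{eq:phi2_admissibility_transient_theorem} yields $|\SF_2(z_{k,1},z_{k,2},u_k)|<1$ pointwise at each $0\le k<k_\rms$, and your induction only serves to track well-posedness. Your explicit caveats are accurate refinements, not a different route: $g_2\neq0$, and definedness of the lifted state and of $\zeta_{k+1,2}^{\rmd}$, which depends on $\phi_1$-admissibility and is left implicit in the paper's hypothesis. So is your remark that $|\rho_{2,k}|<1$ is not used for admissibility; the paper's closing contractivity paragraph adds nothing to that conclusion.
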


\begin{proof}
    See Appendix \ref{proof:forward_invariance_phi_2}.
\end{proof}

Theorem \ref{thm:forward_invariance_phi_2} implies that the argument of $\phi_2$ remains in its admissible domain throughout the transient phase $0 \le k < k_\rms$. 
Consequently, the admissible domain of $\phi_2$ is forward invariant along the closed-loop trajectories during this phase.

\subsection{Conditional Deadbeat Property of the Second Backstepping Step}
The transient phase described above ensures that the argument of the lifting function $\phi_2$ remains admissible while the trajectory evolves toward a region where the deadbeat choice becomes feasible. 
Once such a region is reached, the controller can be switched to the deadbeat choice $\rho_{2,k}=0$. 
The following result establishes that, provided the admissibility condition remains satisfied at the switching time, the second backstepping step operates in deadbeat mode thereafter while preserving admissibility of $\phi_2$.

\begin{theorem}[Conditional Deadbeat Property of the Second Backstepping Step]
\label{thm:conditional_deadbeat_phi2}

Consider the transformed dynamics \eqref{eq:z2_dot} with the control law \eqref{eq:u_k_constraint_lifting}. 
Assume that Theorem \ref{thm:forward_invariance_phi_2} is satisfied.
If, at $k=k_\rms$, the deadbeat choice $\rho_{2,k}=0$ satisfies
\begin{align}
    \left|
        \psi_2\!\left(
            \zeta_{k_\rms+1,2}^{\rmd}
        \right)
    \right|
    < 1,
    \label{eq:psi2condition}
\end{align}
and, for all $k \ge k_\rms,$ $\rho_{2,k}=0,$
then, for all $k \ge k_\rms,$
    $e_{k+1,2} = 0,$
and the argument of $\phi_2$ remains admissible for all $k \ge k_\rms$. 
\end{theorem}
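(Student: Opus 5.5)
The plan is to argue directly from the closed-loop identities of Section~\ref{sec:Controller_design}, splitting the claim into a deadbeat part and an admissibility part.

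\textbf{Deadbeat part.} By construction of the control law \eqref{eq:u_k_constraint_lifting} and the defining property of $\SF_2^\rmI$, we have for every $k$
\begin{align}
    \SF_2(z_{k,1},z_{k,2},u_k) = \rho_{2,k}e_{k,2} + \psi_2\!\left(\zeta_{k+1,2}^{\rmd}\right),
    \label{eq:plan_F2_identity}
\end{align}
and therefore \eqref{eq:e2_dynamics_theorem} holds. Setting $\rho_{2,k}=0$ for all $k\ge k_\rms$ in \eqref{eq:e2_dynamics_theorem} gives $e_{k+1,2}=0$ for all $k\ge k_\rms$, as in the Remark. This step requires that $u_k$ is well defined at each $k\ge k_\rms$. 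That in turn requires $z_{k,1},z_{k,2}$ to be well defined, which follows inductively from admissibility at the previous step: at $k=k_\rms$ it follows from Theorem~\ref{thm:forward_invariance_phi_2} (the case $k=k_\rms-1$), and thereafter from the admissibility part below.

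\textbf{Admissibility part.} With $\rho_{2,k}=0$, identity \eqref{eq:plan_F2_identity} reduces to $\SF_2(z_{k,1},z_{k,2},u_k)=\psi_2(\zeta_{k+1,2}^{\rmd})$. At $k=k_\rms$, hypothesis \eqref{eq:psi2condition} gives $|\SF_2|<1$, so $(z_{k_\rms,1},z_{k_\rms,2},u_{k_\rms})\in\SA_2$ and $z_{k_\rms+1,2}$ is well defined through \eqref{eq:z2_dot}.

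For $k>k_\rms$, I would use that $\psi_2$ maps $\BBR$ into $(-1,1)$, being the inverse of the lifting $\phi_2:(-1,1)\to\BBR$ (the statement writes $\psi_i:\BBR\to\SSS$, normalized). Hence $|\psi_2(\zeta_{k+1,2}^{\rmd})|<1$ automatically whenever $\zeta_{k+1,2}^{\rmd}=z_{k+1,2}^{\rmd}/\overline{x}_2$ is a finite real number. By \eqref{eq:z2dk}, this holds whenever $\SF_1^\rmI$ is defined at the point $(z_{k+1,1},\,\rho_1 e_{k+1,1}+\psi_1(\zeta_{k+2,1}^{\rmd}))$. An induction on $k$ then gives $|\SF_2(z_{k,1},z_{k,2},u_k)|<1$ for all $k\ge k_\rms$, which is exactly admissibility of the argument of $\phi_2$.

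\textbf{Main obstacle.} The hard step is justifying that the virtual reference $z_{k+1,2}^{\rmd}$ stays finite for all $k>k_\rms$, i.e.\ that $\SF_1^\rmI$ is defined along the trajectory. This is where the coupling into $\phi_1$ enters, and it is the reason \eqref{eq:psi2condition} is posed as an explicit hypothesis at $k_\rms$ rather than derived. I would first try to lean on the standing assumption, implicit in the definition \eqref{eq:z2dk}, that the virtual control is well defined. If the paper does not grant that assumption globally, I would state it as part of the hypothesis, namely that \eqref{eq:psi2condition} holds at every $k\ge k_\rms$. In that case the proof closes directly, because \eqref{eq:plan_F2_identity} with $\rho_{2,k}=0$ and that bound give $|\SF_2|<1$ at each step.
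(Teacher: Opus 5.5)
Your deadbeat argument and your handling of $k=k_\rms$ match the paper's proof. The paper substitutes $\rho_{2,k}=0$ into \eqref{eq:second_BS_Step} to get $\SF_2(z_{k,1},z_{k,2},u_k)=\psi_2(\zeta_{k+1,2}^{\rmd})$, and into \eqref{eq:e2_dynamics_theorem} to get $e_{k+1,2}=0$.

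The difference is the step to $k>k_\rms$. The paper simply asserts admissibility for all $k\ge k_\rms$ ``by \eqref{eq:psi2condition}'', even though that bound is assumed only at $k=k_\rms$. So it tacitly uses your fallback reading. Your primary route closes that step, given the well-posedness the statement presupposes: $\psi_2$ inverts $\phi_2:(-1,1)\to\BBR$, so $|\psi_2(\zeta)|<1$ for every real $\zeta$, and $\zeta_{k+1,2}^{\rmd}$ is real whenever \eqref{eq:u_k_constraint_lifting} is defined.

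Your route also exposes what the paper's phrasing hides. In the lifted framework \eqref{eq:psi2condition} is automatic, and the substantive requirement is solvability of $\SF_1^\rmI$ in \eqref{eq:z2dk}. In the Roup example with $\rho_1=0$, that requirement is $|x_{k+1,2}^{\rmd}|<\overline{x}_2$. For $k>k_\rms$, where $x_{k+1,1}=x_{k+1,1}^{\rmd}$, it reduces to the reference-rate condition $|x_{k+2,1}^{\rmd}-x_{k+1,1}^{\rmd}|<T\overline{x}_2$. So under the physical reading used in the example, a bound at the single time $k_\rms$ does not propagate by itself. The paper's version is shorter; yours identifies the assumption actually being used.

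One correction concerns your inductive well-definedness claim, which only covers $z_{k,2}$. Theorem~\ref{thm:forward_invariance_phi_2} and your admissibility argument concern $\phi_2$, whereas $z_{k,1}$ requires $|\SF_1(z_{k-1,1},z_{k-1,2})|<1$. The paper explicitly does not establish this for $k\le k_\rms$. Theorem~\ref{thm:forward_invariance_phi_1} supplies it only afterward, and it uses the present theorem plus the extra assumption $\rho_1=0$. Fold this into the same standing well-posedness assumption rather than claiming it follows inductively. Also note that for $k_\rms=0$ there is no case $k=k_\rms-1$; well-definedness at $k_\rms$ must then come from the initial condition.
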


\begin{proof}
    See Appendix \ref{proof:conditional_deadbeat_phi2}.
\end{proof}

Theorem \ref{thm:conditional_deadbeat_phi2} implies that, once the switching condition \eqref{eq:psi2condition} is satisfied, the second-step error is driven to zero in one step and remains identically zero thereafter. 
Consequently, the admissibility of the argument of $\phi_2$ is preserved for all $k \ge k_\rms$.

\subsection{Forward Invariance of the Admissible Domain of $\phi_1$}

The following result establishes forward invariance of the admissible domain of the lifting function $\phi_1$ once the second backstepping step has entered deadbeat mode.
Unlike a claim stated for all $k\geq0$, admissibility of $\phi_1$ here is established only from the switching time $k_\rms$ onward: by \eqref{eq:e1_actual_dynamics}, $\SF_1(z_{k,1},z_{k,2})$ depends on $e_{k,2}$ through the coupling term $a_ke_{k,2}$, and $e_{k,2}=0$ is guaranteed by Theorem~\ref{thm:conditional_deadbeat_phi2} only for $k>k_\rms$.

\begin{theorem}[Forward Invariance of the Admissible Domain of $\phi_1$]
\label{thm:forward_invariance_phi_1}
Consider the transformed dynamics \eqref{eq:z1_dot} and the first contraction condition \eqref{eq:first_BS_Step} with $\rho_1=0$.
Assume that $|x_{k,1}^{\rmd}|<\overline{x}_1$ for all $k\geq0$, and that the conditions of Theorem~\ref{thm:conditional_deadbeat_phi2} hold at some switching time $k_\rms\geq0$.
Then, for all $k>k_\rms$,
\begin{align}
    \left|
        \SF_1(z_{k,1},z_{k,2})
    \right|
    <
    1.
    \label{eq:phi1_domain_requirement}
\end{align}
\end{theorem}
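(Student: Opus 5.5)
The plan is to express $\SF_1(z_{k,1},z_{k,2})$ through the closed-loop error quantities and then show that, for $k>k_\rms$, it collapses to the normalized reference, which lies strictly inside $(-1,1)$. The starting point is the identity obtained by adding and subtracting $\SF_1(z_{k,1},z_{k,2}^{\rmd})$ and using \eqref{eq:first_BS_virtual_condition}:
\begin{align*}
    \SF_1(z_{k,1},z_{k,2})
    =
    \psi_1\!\left(\zeta_{k+1,1}^{\rmd}\right)
    + \rho_1 e_{k,1}
    + a_k e_{k,2}.
\end{align*}
This is exactly the content of \eqref{eq:e1_actual_dynamics}, rewritten in terms of $\SF_1$ instead of $e_{k+1,1}$.

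With $\rho_1=0$, the middle term vanishes identically for every $k$. By Theorem~\ref{thm:conditional_deadbeat_phi2}, whose hypotheses hold at $k_\rms$, we have $e_{k+1,2}=0$ for all $k\ge k_\rms$, that is, $e_{k,2}=0$ for all $k>k_\rms$. The same theorem guarantees that the argument of $\phi_2$ stays admissible, so $z_{k,2}$ and hence $\psi_2(\zeta_{k,2})$ are well defined along the trajectory. The coupling term $a_ke_{k,2}$ then vanishes, whatever the value of $a_k$, provided $a_k$ is finite. Finiteness holds because $x_{k,1}=\overline{x}_1\psi_1(\zeta_{k,1})$ is well defined and $g_1$ is real-valued. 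This yields $\SF_1(z_{k,1},z_{k,2})=\psi_1(\zeta_{k+1,1}^{\rmd})$ for $k>k_\rms$.

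It remains to bound the reference term. From \eqref{eq:z1d} and the fact that $\psi_1$ inverts $\phi_1$, we get $\psi_1(\zeta_{k+1,1}^{\rmd})=\psi_1(\phi_1(\chi_{k+1,1}^{\rmd}))=\chi_{k+1,1}^{\rmd}=x_{k+1,1}^{\rmd}/\overline{x}_1$. Its magnitude is strictly less than $1$ by the assumption $|x_{k,1}^{\rmd}|<\overline{x}_1$, which yields \eqref{eq:phi1_domain_requirement}.

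The main obstacle is well-posedness rather than the algebra. The identity above requires that $z_{k,1}$, $z_{k,2}$ and $z_{k,2}^{\rmd}$ all exist at each step, and in particular that $\SF_1^\rmI$ yields a real $z_{k,2}^{\rmd}$. That in turn requires $g_1\neq0$ along the trajectory and that the value $\rho_1e_{k,1}+\psi_1(\zeta_{k+1,1}^{\rmd})$ lie in the range of $\psi_2$, rescaled by $\overline{x}_2/\overline{x}_1$. I would treat these as standing assumptions already implicit in the construction \eqref{eq:z2dk}. I would then argue by induction on $k>k_\rms$: admissibility at step $k$ makes $z_{k+1,1}$ well defined, so the identity can be reapplied at step $k+1$. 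This propagates the bound for all $k>k_\rms$, with the inductive step tied to the invariance guaranteed by Theorem~\ref{thm:conditional_deadbeat_phi2} for $\phi_2$.
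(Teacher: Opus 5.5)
Your argument is the paper's proof: you write $\SF_1(z_{k,1},z_{k,2})=\psi_1(\zeta_{k+1,1}^{\rmd})+a_ke_{k,2}$ using $\rho_1=0$, invoke Theorem~\ref{thm:conditional_deadbeat_phi2} to get $e_{k,2}=0$ for $k>k_\rms$, and conclude from $\psi_1(\zeta_{k+1,1}^{\rmd})=\chi_{k+1,1}^{\rmd}$ together with $|\chi_{k+1,1}^{\rmd}|<1$. Your well-posedness discussion goes beyond the paper, which simply inherits these implicit assumptions through Theorem~\ref{thm:conditional_deadbeat_phi2}; note, though, that your induction cannot start from admissibility at $k_\rms$, because the theorem deliberately leaves $k\le k_\rms$ open, so the existence of $z_{k_\rms+1,1}$ has to be part of that standing assumption.
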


\begin{proof}
    See Appendix \ref{proof:forward_invariance_phi_1}.
\end{proof}

Theorem \ref{thm:forward_invariance_phi_1} implies that the argument of $\phi_1$ remains in its admissible domain for all $k > k_\rms$. 
Consequently, the admissible domain of $\phi_1$ is forward invariant along the closed-loop trajectories from the switching time onward.
Theorem~\ref{thm:forward_invariance_phi_1} does not address $0\leq k\leq k_\rms$: during and at the switching step, $e_{k,2}$ need not be zero, and the coupling term $a_ke_{k,2}$ can perturb $\SF_1$ away from $\chi_{k+1,1}^{\rmd}$.
Admissibility of $\phi_1$ over that interval is not established by this framework and is not claimed.

\section{Numerical Example}
\label{sec:Numerical_Example}
This section illustrates the controller developed in Section \ref{sec:Controller_design} and verifies the admissibility conditions for $\phi_1$ and $\phi_2$ established in Section \ref{sec:Admisibility_constraint_leifting}.
Consider the controlled Roup oscillator \cite{roup2001adaptive}, discretized by forward Euler with sampling period $T>0$,
\begin{align}
    x_{k+1,1} &= x_{k,1} + T x_{k,2},
    \label{eq:roup_x1_discrete}
    \\
    x_{k+1,2} 
        &=
            x_{k,2} - 
            T\lambda\!\left(x_{k,1}^2+\frac{x_{k,2}^2}{\omega_0^2}
            -a^2\right)
            \!x_{k,2}
            \nn \\ &\quad 
            - T\omega_0^2 x_{k,1} + Tu_k,
    \label{eq:roup_x2_discrete}
\end{align}
so that $f_1(x_1)=x_1$, $g_1(x_1)=T$, $g_2(x_1,x_2)=T$, and $f_2$ is the drift term in \eqref{eq:roup_x2_discrete}.
The objective is to track the command $x_{k,1}^{\rmd}$ while satisfying $|x_{k,1}|<\overline{x}_1$ and $|x_{k,2}|<\overline{x}_2$.

\subsection{Controller}

In this example, the inverse sigmoid lifting function $\phi(\chi) = \mathrm{atanh}(\chi)$ is used for both states, and $\rho_1 = 0$.
Since $g_1(x_1)=T$ is constant, the virtual desired state from \eqref{eq:z2dk} reduces to the physical-coordinate expression
\begin{align}
    x_{k,2}^{\rmd} &= \frac{x_{k+1,1}^{\rmd}-x_{k,1}}{T}.
    \label{eq:roup_x2d}
\end{align}
The control law \eqref{eq:u_k_constraint_lifting}, specialized to \eqref{eq:roup_x1_discrete}--\eqref{eq:roup_x2_discrete}, is
\begin{align}
    u_k &= \frac{x_{k+1,2}^{\rmd} + \rho_{2,k}\!\left(x_{k,2}-x_{k,2}^{\rmd}\right) - x_{k,2}}{T}
    \nn\\
    &\quad + \lambda\!\left(x_{k,1}^2+\frac{x_{k,2}^2}{\omega_0^2}-a^2\right)\!x_{k,2} + \omega_0^2 x_{k,1}.
    \label{eq:roup_control_switching}
\end{align}

\begin{remark}
    The control law \eqref{eq:roup_control_switching} is expressed entirely in physical coordinates.
The lifting function $\phi$ and its inverse $\psi$ do not appear explicitly, since they cancel algebraically in the derivation leading to \eqref{eq:u_k_constraint_lifting}.
Consequently, $u_k$ remains well defined and well conditioned as the state approaches the constraint boundary, in contrast to the ill-conditioning that direct evaluation of $\phi$ or its derivatives would exhibit near the boundary.
\end{remark}
\subsection{Reference, Admissible Initial Condition, and Switching Law}

Consider the desired trajectory $x_{k,1}^{\rmd}=r\cos(\omega kT)$.
As in Section~\ref{sec:Admisibility_constraint_leifting}, an initial condition must satisfy the one-step position constraint $|x_{0,1}+Tx_{0,2}|<\overline{x}_1$ for $\phi_1$-admissibility to be attainable at all.
This condition alone does not guarantee $\phi_2$-admissibility at $k=0$: the required virtual state $x_{k,2}^{\rmd}$ in \eqref{eq:roup_x2d} depends on $x_{k,1}$ and the reference, and can exceed $\overline{x}_2$ even when the position constraint is satisfied.
Enforcing $\rho_{2,k}=0$ unconditionally from $k=0$ is therefore not always admissible, motivating the transient switching gain used below.

Define the raw second-state error $\Delta_k \isdef x_{k,2}-x_{k,2}^{\rmd}$, so that $e_{k,2}=\Delta_k/\overline{x}_2$.
We choose
\begin{align}
    \rho_{2,k} &=
    \begin{cases}
        0, & |\Delta_k| < \overline{x}_2,
        \\[4pt]
        1-\dfrac{\overline{x}_2}{2|\Delta_k|}, & |\Delta_k| \geq \overline{x}_2.
    \end{cases}
    \label{eq:rho2_roup_switching}
\end{align}
As in earlier constructions of this type, this switching law is not unique; it is one admissible choice consistent with Theorems~\ref{thm:forward_invariance_phi_2} and \ref{thm:conditional_deadbeat_phi2}, verified numerically below.

\subsection{Results}

We simulate \eqref{eq:roup_control_switching} with \eqref{eq:rho2_roup_switching} from the initial condition $x_0=(-1.40,-0.45)$, for which $x_{1,1}=x_{0,1}+Tx_{0,2}=-1.49$ satisfies the position constraint.
Figure~\ref{fig:roup_switching_phase_plane} shows the resulting closed-loop trajectory under \eqref{eq:rho2_roup_switching}.
Figure~\ref{fig:roup_switching_states} shows the corresponding state, control input, and gain histories.
At this initial condition, the deadbeat choice $\rho_{2,k}=0$ from $k=0$ is inadmissible.
It drives $x_{1,2}$ to $11.13$, which exceeds $\overline{x}_2=1.5$.
The gain $\rho_{2,k}$ starts at $0.93$ and decreases as the trajectory approaches feasibility, switching to $\rho_{2,k}=0$ at $k_\rms=14$ and remaining there; $x_1$ and $x_2$ stay strictly within their bounds throughout, and $x_1$ converges to the commanded value after the switch.

\begin{figure}[!ht]
    \centering
    \includegraphics[width=\columnwidth]{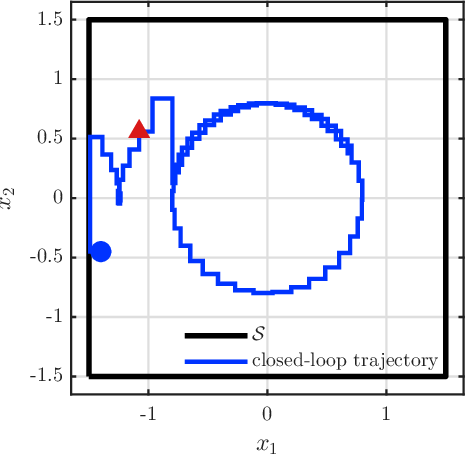}
    \caption{Closed-loop phase portrait for the Roup oscillator under the transient switching gain \eqref{eq:rho2_roup_switching}, starting from an initial condition for which the deadbeat choice $\rho_{2,k}=0$ is inadmissible at $k=0$. The circle marks $x_0$; the triangle marks the state at the switching time $k_\rms$.}
    \label{fig:roup_switching_phase_plane}
\end{figure}

\begin{figure}[!ht]
    \centering
    \includegraphics[width=\columnwidth]{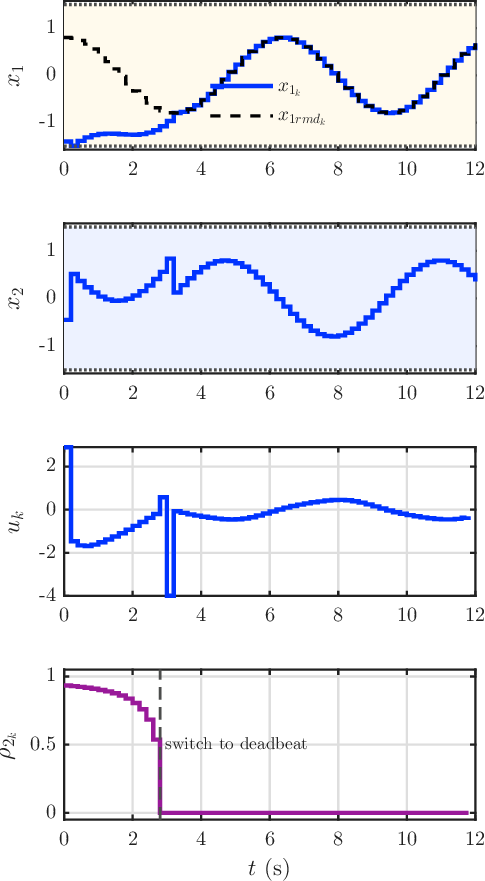}
    \caption{State, control input, and gain evolution for the Roup oscillator under the transient switching gain \eqref{eq:rho2_roup_switching}. The safe sets for $x_1$ and $x_2$ are shown in shaded yellow and blue, respectively. The gain $\rho_{2,k}$ decreases from the initial transient and switches to $0$ at $k_\rms$, indicated by the dashed vertical line.}
    \label{fig:roup_switching_states}
\end{figure}

\section{Conclusions}

This paper developed a constraint-lifting control framework for discrete-time strict-feedback nonlinear systems subject to state constraints. 
The proposed approach transforms the constrained control problem into an equivalent unconstrained one using smooth constraint-lifting mappings constructed from strictly increasing sigmoid functions. 
Controller synthesis is performed in the lifted coordinates using a recursive backstepping design, while the inverse transformation guarantees that the original state constraints are satisfied.

Sufficient conditions on the controller gains were derived to ensure asymptotic stability of the closed-loop system and forward invariance of the admissible domain of the lifting functions. 
The analysis further established a conditional deadbeat property for the second backstepping step once the closed-loop trajectory enters a region where the admissibility conditions are satisfied. 
Numerical simulations on a constrained Roup oscillator demonstrated that the proposed method preserves state constraints while achieving stable regulation and tracking.


\label{sec:conclusions}

\appendix
\section{Proofs}

\subsection{Proof of Theorem \ref{thm:forward_invariance_phi_2}}
\label{proof:forward_invariance_phi_2}
\begin{proof}
It follows from \eqref{eq:second_BS_Step} that 
\begin{align}
    \SF_2(z_{k,1},z_{k,2},u_k)
        &=
            \rho_{2,k} e_{k,2}
            +
            \psi_2
            \left(
                \zeta_{k+1,2}^{\rmd}
            \right). 
    \label{eq:SF2_relation_theorem}
\end{align}
Hence, \eqref{eq:phi2_admissibility_transient_theorem} implies, for all $0 \leq k < k_\rms,$
    $\left|
        \SF_2(z_{k,1},z_{k,2},u_k)
    \right|
        <
            1.$ 
Therefore, the argument of $\phi_2$ remains within its admissible domain for all
$0 \leq k < k_\rms$.

Moreover, the second-step error dynamics satisfies \eqref{eq:e2_dynamics_theorem}.
Since $|\rho_{2,k}|<1$ for all $0 \leq k < k_\rms$, the error dynamics are
contractive during the transient phase. Hence, the admissibility condition of
$\phi_2$ is preserved throughout the transient phase.
\end{proof}

\subsection{Proof of Theorem \ref{thm:conditional_deadbeat_phi2}}
\label{proof:conditional_deadbeat_phi2}

\begin{proof}

Assume that the conditions of Theorem
\ref{thm:forward_invariance_phi_2} hold for all $0 \leq k < k_\rms$.
Suppose that, at time $k_\rms$, the closed-loop trajectory reaches a point for
which the deadbeat choice $\rho_{2,k}=0$ preserves the admissibility of the
argument of $\phi_2$.
Setting $\rho_{2,k}=0$ into \eqref{eq:second_BS_Step} yields
    $\SF_2(z_{k,1},z_{k,2},u_k)
        =
            \psi_2
            \left(
                \zeta_{k+1,2}^{\rmd}
            \right).$
Hence, by \eqref{eq:psi2condition}, the admissibility of the argument of $\phi_2$ is preserved for all $k \geq k_\rms$.

Furthermore, substituting $\rho_{2,k}=0$ into the error dynamics
\eqref{eq:e2_dynamics_theorem} implies that, for all, $k \geq k_\rms,$
    $e_{k+1,2}
        =
            0.$
Therefore, once the switching condition is satisfied, the second-step error is
driven to zero in one step and remains identically zero thereafter. This
establishes the deadbeat property of the second backstepping step.
\end{proof}

\subsection{Proof of Theorem \ref{thm:forward_invariance_phi_1}}
\label{proof:forward_invariance_phi_1}
\begin{proof}

From \eqref{eq:e1_actual_dynamics} with $\rho_1=0$,
\begin{align}
    \SF_1(z_{k,1},z_{k,2})
        &=
            \psi_1
            \left(
                \zeta_{k+1,1}^{\rmd}
            \right)
            +
            a_k e_{k,2}.
    \label{eq:SF1_with_coupling}
\end{align}
By Theorem~\ref{thm:conditional_deadbeat_phi2}, $e_{k,2}=0$ for all $k>k_\rms$.
Hence, for $k>k_\rms$, \eqref{eq:SF1_with_coupling} reduces to
\begin{align}
    \SF_1(z_{k,1},z_{k,2})
        =
            \psi_1
            \left(
                \zeta_{k+1,1}^{\rmd}
            \right)
        =
            \chi_{k+1,1}^{\rmd}.
\end{align}

Since $|x_{k+1,1}^{\rmd}| < \overline{x}_1$ for all $k\geq0$,
it follows that $|\chi_{k+1,1}^{\rmd}| < 1$,
which proves \eqref{eq:phi1_domain_requirement} for all $k>k_\rms$.
\end{proof}

\printbibliography

@phdthesis{portella2026constrained,
  title  = {Adaptive and Safe Control of Nonlinear Systems with Multiplicative Uncertainty and State Constraints},
  author = {Portella Delgado, Jhon Manuel},
  year   = {2026},
  school = {University of Maryland, Baltimore County},
  type   = {{PhD} thesis},
  note   = {Available at \url{http://hdl.handle.net/11603/43179}}
}

@article{Blanchini1999,
  author       = {Franco Blanchini},
  title        = {Set Invariance in Control},
  journal      = {Automatica},
  volume       = {35},
  number       = {11},
  pages        = {1747--1767},
  year         = {1999},
  doi          = {10.1016/S0005-1098(99)00113-2}
}

@article{Gilbert1995,
  author       = {Eugene G. Gilbert and Ilya Kolmanovsky and Kok Tin Tan},
  title        = {Discrete-Time Reference Governors and the Nonlinear Control of Systems with State and Control Constraints},
  journal      = {International Journal of Robust and Nonlinear Control},
  volume       = {5},
  number       = {5},
  pages        = {487--504},
  year         = {1995},
  doi          = {10.1002/rnc.4590050508}
}

@article{Garone2017,
  author       = {Emanuele Garone and Stefano Di Cairano and Ilya Kolmanovsky},
  title        = {Reference and Command Governors for Systems with Constraints: A Survey on Theory and Applications},
  journal      = {Automatica},
  volume       = {75},
  pages        = {306--328},
  year         = {2017},
  doi          = {10.1016/j.automatica.2016.08.013}
}

@article{Tee2009,
  author       = {Keng Peng Tee and Shuzhi Sam Ge and Eng Hock Tay},
  title        = {Barrier Lyapunov Functions for the Control of Output-Constrained Nonlinear Systems},
  journal      = {Automatica},
  volume       = {45},
  number       = {4},
  pages        = {918--927},
  year         = {2009},
  doi          = {10.1016/j.automatica.2008.11.017}
}

@inproceedings{Agrawal2017,
  author       = {Ayush Agrawal and Koushil Sreenath},
  title        = {Discrete Control Barrier Functions for Safety-Critical Control of Discrete Systems with Application to Bipedal Robot Navigation},
  booktitle    = {Proceedings of Robotics: Science and Systems},
  address      = {Cambridge, Massachusetts},
  month        = jul,
  year         = {2017},
  doi          = {10.15607/RSS.2017.XIII.073}
}

@article{Guo2014,
  author       = {Tao Guo and Xiaowei Wu},
  title        = {Backstepping Control for Output-Constrained Nonlinear Systems Based on Nonlinear Mapping},
  journal      = {Neural Computing and Applications},
  volume       = {25},
  pages        = {1665--1674},
  year         = {2014},
  doi          = {10.1007/s00521-014-1650-9}
}

@misc{delgado2026stateconstrained,
  title = {State-Constrained Control of Discrete-Time Nonlinear Systems via Constraint Lifting},
  author = {Delgado, Jhon Manuel Portella and Goel, Ankit},
  year = {2026},
  month = apr,
  doi = {10.48550/arxiv.2604.24989},
  eprint = {2604.24989},
  archivePrefix = {arXiv}
}

@book{rugh1996linear,
  author    = {Rugh, Wilson J.},
  title     = {Linear System Theory},
  edition   = {2nd},
  publisher = {Prentice Hall},
  address   = {Upper Saddle River, NJ},
  year      = {1996},
  series    = {Prentice Hall Information and System Sciences Series},
  isbn      = {0134412052}
}

@article{roup2001adaptive,
  title = {Adaptive stabilization of a class of nonlinear systems with nonparametric uncertainty},
  author = {Roup, A.V. and Bernstein, D.S.},
  journal = {IEEE Transactions on Automatic Control},
  year = {2001},
  month = jan,
  volume = {46},
  number = {11},
  pages = {1821-1825},
  publisher = {Institute of Electrical and Electronics Engineers (IEEE)},
  doi = {10.1109/9.964699}
}
\end{document}